\newtheorem{theorem}{Theorem}[section]
\newtheorem{lemma}[theorem]{Lemma}
\theoremstyle{remark}
\numberwithin{equation}{section}
\newcommand{\Hcal}{\mathscr{H}}
\newcommand{\Lcal}{\mathscr{L}}
\newcommand{\Rcal}{\mathscr{R}}
\newcommand{\Ycal}{\mathscr{Y}}
\newcommand{\Z}{\mathbb{Z}}
\newcommand{\C}{\mathbb{C}}
\newcommand{\Q}{\mathbb{Q}}
\newcommand{\Gal}{\mathrm{Gal}}
  \DeclareFontFamily{U}{wncy}{}
    \DeclareFontShape{U}{wncy}{m}{n}{<->wncyr10}{}
    \DeclareSymbolFont{mcy}{U}{wncy}{m}{n}
    \DeclareMathSymbol{\Sha}{\mathord}{mcy}{"58}
\begin{document}
\title[]{Hilbert's tenth problem for rings of holomorphic functions of bounded order}

\author{Hector Pasten}
\address{ Departamento de Matem\'aticas,
Pontificia Universidad Cat\'olica de Chile.
Facultad de Matem\'aticas,
4860 Av.\ Vicu\~na Mackenna,
Macul, RM, Chile}
\email[H. Pasten]{hector.pasten@uc.cl}%

\thanks{Supported by ANID Fondecyt Regular grant 1230507 from Chile.}
\date{\today}
\subjclass[2020]{Primary: 03B25 ; Secondary: 30D15, 11U05} %
\keywords{Hilbert's tenth problem, holomorphic functions, finite order, Pell equations}%

\begin{abstract} One of the main open problems in the context of extensions of Hilbert's tenth problem (HTP) is the case of the ring of complex entire functions in one variable. In the direction of an answer, for every $\rho\ge 0$, we give a negative solution to the analogue of HTP in the ring of complex entire functions in one variable of growth order at most $\rho$. 
\end{abstract}

\maketitle



\section{Introduction} 

Hilbert's tenth problem asked for an algorithm that decides the solvability in $\Z$ of polynomial Diophantine equations with coefficients in $\Z$. This problem was negatively solved in 1970 by work of Davis, Putnam, Robinson, and Matiyasevich \cite{DPR, Matiyasevich}, and after this breakthrough there has been considerable research around extensions of the problem to other rings. One of the main open cases is that of the ring $\Hcal$ of complex holomorphic functions in one variable over $\C$. While the literature on Hilbert's tenth problem for rings of \emph{algebraic} functions is abundant, the results for subrings of $\Hcal$ are rather limited due to the presence of \emph{transcendental} functions. These results in the transcendental setting basically concern the cases of  exponential polynomials \cite{CGFPPV} and rings of holomorphic functions of finite order with lacunary Taylor expansion  \cite{GFPlacunary}. 

 On the other hand, the analogue of Hilbert's tenth problem for non-Archimedian rigid analytic functions in one variable (the non-archimedian analogue of $\Hcal$) has been solved negatively \cite{LipPhe, GFPpos}, while the version for rings of complex holomorphic functions on at least two variables has also been settled \cite{PheidasVidaux}.
 
 In this work we give a negative solution to the analogue of Hilbert's tenth problem for rings of complex holomorphic functions of bounded order. Before stating our main result, let us recall the notion of order of an entire function. Given a holomorphic function $f\in \Hcal$, the order $\alpha(f)$ of $f$ is the infimum of all real numbers $\alpha\ge 0$ such that as $r\to \infty$ we have the bound
 $$
 \log\max_{|z|\le r} |f(z)|\le O(r^\alpha)
 $$
(if no $\alpha$ satisfies this bound, one lets $\alpha(f)=+\infty$.) For any given $\rho\ge 0$, we define 
$$
\Hcal^\rho=\{f\in \Hcal: \alpha(f)\le \rho\}
$$ 
and we observe that $\Hcal^\rho$ is a ring that contains $\C[z]$. In particular, $\Hcal^\rho$ is an $\Lcal_z$-structure for the language $\Lcal_z=\{0,1,z,+,\cdot, =\}$. With this notation, our main results are:

\begin{theorem}\label{ThmMain1} Let $\rho \ge 0$. The set $\Z\subseteq \Hcal^\rho$ is positive existentially $\Lcal_z$-definable in $\Hcal^\rho$. 
\end{theorem}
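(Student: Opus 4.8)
The plan is to reduce the assertion to the classical case of the polynomial ring $\C[z]$. It suffices to produce a positive existential $\Lcal_z$-formula defining the subring $\C[z]$ inside $\Hcal^\rho$: once this is done one relativizes to $\C[z]$ all the quantifiers of a positive existential $\Lcal_z$-definition of $\Z$ in the ring $\C[z]$, and since positive existential formulas are closed under $\wedge$, $\vee$, $\exists$, and under relativization of existential quantifiers to a positive existentially definable set, the result is a positive existential $\Lcal_z$-definition of $\Z$ in $\Hcal^\rho$. Thus the proof splits into step (B), defining $\Z$ in $\C[z]$ (the classical Pell-equation argument), and step (A), defining $\C[z]$ in $\Hcal^\rho$ (the heart of the matter).

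Step (B) I would run exactly as in the polynomial case. Over $\C[z]$ the Pell equation $X^{2}-(z^{2}-1)Y^{2}=1$ has precisely the solutions $(X,Y)=(\pm T_{n},\pm U_{n-1})$, $n\ge 0$, with $T_{n},U_{n-1}$ the Chebyshev polynomials, because the curve $w^{2}=z^{2}-1$ is rational and the unit group of its coordinate ring is generated modulo $\C^{\times}$ by $z+w$. The field of constants $\C$ is positive existentially definable in $\C[z]$ as $\{c:\ c=0\ \vee\ \exists d\,(cd=1)\}$, since the units of $\C[z]$ are $\C^{\times}$. Using that $T_{n}(z)\equiv 1+n^{2}(z-1)\pmod{(z-1)^{2}}$ while $-T_{n}(z)\not\equiv 1\pmod{z-1}$, a constant $c$ is a square $n^{2}$ with $n\ge 0$ iff there are $f,g\in\C[z]$ with $f^{2}-(z^{2}-1)g^{2}=1$, $(z-1)\mid(f-1)$ and $(z-1)^{2}\mid(f-1-c(z-1))$; this defines $\{n^{2}:n\ge 0\}$. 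By Lagrange's four-square theorem $\N=\{s_{1}+s_{2}+s_{3}+s_{4}:s_{i}\in\{n^{2}:n\ge 0\}\}$ and $\Z=\{a-b:a,b\in\N\}$, all positive existential.

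For step (A) I would characterize $\C[z]$ as the set of elements of $\Hcal^\rho$ of at most polynomial growth and try to render this ring-theoretic. The key analytic input is a dichotomy for Pell solutions over $\Hcal^\rho$: if $f,g\in\Hcal^\rho$ satisfy $f^{2}-(z^{2}-1)g^{2}=1$, then, pulling back along the rational double cover $t\mapsto(t+t^{-1})/2$ of $\C$ and using that the units of the ring of holomorphic functions on $\C^{\times}$ are the $c\,t^{m}e^{g(t)}$, one finds $f=\tfrac12\bigl((z+w)^{n}e^{wh}+(z-w)^{n}e^{-wh}\bigr)$ with $w^{2}=z^{2}-1$, for some $n\ge 0$ and some entire $h$ with $wh$ of order $\le\rho$; moreover $f-1=(\phi-1)^{2}/(2\phi)$ with $\phi=(z+w)^{n}e^{wh}$, so if $h\equiv 0$ then $f=\pm T_{n}$ and $f-1$ has finitely many zeros, whereas if $h\not\equiv 0$ then $\phi$ attains the value $1$ infinitely often and $f-1$ has infinitely many zeros. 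Hence ``$f-1$ has finitely many zeros'' (equivalently $N(r,f-1)=O(\log r)$) isolates the Chebyshev polynomials among the Pell solutions, and the remaining task is to express this finiteness by a positive existential $\Lcal_z$-condition.

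The hard part will be precisely this last step, and it is where $\alpha(f)\le\rho$ must be used in an essential way, the analogous problem for $\Hcal$ being open. The naive formula ``$\exists$ polynomial $P$ with $(f-1)\mid P$'' is circular. What the order bound buys is, first, that the units of $\Hcal^\rho$ are exactly the zero-free functions in $\Hcal^\rho$ (a zero-free $f=e^{G}\in\Hcal^\rho$ has $e^{-G}$ of the same order, hence in $\Hcal^\rho$), so zero-free factors cannot be hidden; and second, that the number of zeros of any $f\in\Hcal^\rho$ in $|z|\le r$ is $O(r^{\rho})$, which separates the $O(\log r)$ zero count of a polynomial from that of any transcendental element of $\Hcal^\rho$. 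The plan would then be to convert this quantitative separation into a divisibility/Pell gadget available inside $\Hcal^\rho$; carrying this out is the main obstacle, after which Theorem~\ref{ThmMain1} follows from step (B).
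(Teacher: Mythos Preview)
Your proposal correctly isolates the analytic backbone --- the classification of holomorphic Pell solutions as $\epsilon(z+w)^{n}e^{hw}$ and the fact that membership in $\Hcal^\rho$ forces $h$ to be a polynomial of degree at most $\lfloor\rho\rfloor-1$ --- but it stops precisely at the step that constitutes the theorem. You say yourself that ``carrying this out is the main obstacle'': you need a positive existential condition that forces $h=0$, and you do not supply one. The heuristic you offer (separating finitely many zeros of $f-1$ from infinitely many) is not something one knows how to render by a positive existential $\Lcal_z$-formula; absent a concrete gadget, this is essentially the open problem for $\Hcal$ restated under a growth hypothesis. There is also a secondary gap: even granting a definition of the set of Chebyshev polynomials, you have not explained how one passes from that set to a definition of all of $\C[z]$ (not every polynomial divides a $T_n$, and ``$\C$-linear combination of Chebyshev polynomials'' is not first-order).

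The paper does not try to define $\C[z]$ at all. It fixes $k=pq$ with primes $p,q$ large relative to $\lfloor\rho\rfloor$ and defines directly the set $\{y_{kn}:n\in\Z\}$ by imposing on a Pell solution $(x,y)$ two positive existential conditions: (i) $(x,y)$ is a $k$-th power for the group law $\oplus$, and (ii) $y_k\mid y$ in $\Hcal^\rho$. Condition (i) forces the shape $(z+w)^{k\ell}e^{hw}$; condition (ii) forces the function $v=\sin(2\pi gw)/w$ (with $g=h/2\pi i$) to vanish at every root of $y_k$, i.e.\ at all $z_0=\cos(2\pi a/k)$, $1\le a\le k-1$. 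This makes the polynomial $F(z)=g(z)^2(z^2-1)$ take \emph{rational} values at every $\cos(2\pi a/k)$. A short Galois argument over the totally real field $\Q(\cos(2\pi/k))$ then shows that a non-constant $F$ with this property must have degree exceeding $2(\lfloor\rho\rfloor+1)$, contradicting $\deg h\le\lfloor\rho\rfloor-1$; hence $h=0$ and $y=y_{kn}$. From $\{y_{kn}\}$ one reaches $k\Z$ by evaluating at $z=1$ via $\exists f\,(z-1)f=y-t$, with $t\in\C$ defined through a genus-$2$ curve and Picard (not via units, which in $\Hcal^\rho$ are $c\,e^{P}$, not just $\C^\times$), and $\Z$ follows.

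So the missing idea is this: rather than encoding ``$h=0$'' through a finiteness-of-zeros statement, one exploits the \emph{arithmetic} of the roots of $y_k$ --- they lie in a cyclotomic field with a large Galois orbit --- to convert the degree bound on $h$ into an outright contradiction with a divisibility condition that \emph{is} positive existential.
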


\begin{theorem}\label{ThmMain2} Let $\rho \ge 0$. The positive existential theory of $\Hcal^\rho$ over the language $\Lcal_z$ is undecidable. Thus, the analogue of Hilbert's tenth problem for $\Hcal^\rho$ with coefficients in $\Z[z]$ has a negative solution.
\end{theorem}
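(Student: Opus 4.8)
The plan is to deduce Theorem~\ref{ThmMain2} from Theorem~\ref{ThmMain1} together with the negative solution of Hilbert's tenth problem over $\Z$ (the theorem of Davis, Putnam, Robinson and Matiyasevich \cite{DPR, Matiyasevich}), via the standard transfer of undecidability along a positive existential definition; essentially all of the mathematical content sits in Theorem~\ref{ThmMain1}, and what remains is bookkeeping. First I would record Theorem~\ref{ThmMain1} in a convenient normal form: there is a positive existential $\Lcal_z$-formula
\[
\theta(x)\ :=\ \exists y_1\cdots\exists y_m\ \bigvee_{k}\ \bigwedge_{j}\ \bigl(P_{kj}(x,y_1,\dots,y_m)=0\bigr),
\]
with all $P_{kj}\in\Z[z][x,y_1,\dots,y_m]$, such that for every $a\in\Hcal^\rho$ one has $\Hcal^\rho\models\theta(a)$ if and only if $a\in\Z$. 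Any positive existential $\Lcal_z$-formula can be brought into this shape (prenex form, then distribute into a disjunction of conjunctions of equations, then move everything to one side), and the coefficients lie in $\Z[z]$ precisely because $z$ is a constant symbol of $\Lcal_z$; this last point is what forces the conclusion to refer to coefficients in $\Z[z]$ rather than in $\Z$.

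Next I would set up the reduction from Hilbert's tenth problem over $\Z$. Given $P(x_1,\dots,x_n)\in\Z[x_1,\dots,x_n]$, consider the positive existential $\Lcal_z$-sentence
\[
\sigma_P\ :=\ \exists x_1\cdots\exists x_n\ \Bigl(\ \bigwedge_{i=1}^{n}\theta(x_i)\ \wedge\ P(x_1,\dots,x_n)=0\ \Bigr),
\]
and I claim $\Hcal^\rho\models\sigma_P$ if and only if $P$ has a zero in $\Z^n$. Indeed, if $\bar a\in\Z^n$ with $P(\bar a)=0$, then each $a_i$ lies in $\Z\subseteq\Hcal^\rho$, so $\Hcal^\rho\models\theta(a_i)$ by the choice of $\theta$, while $P(\bar a)=0$ holds in $\Z$ hence in $\Hcal^\rho$. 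Conversely, if $\bar a\in(\Hcal^\rho)^n$ witnesses $\sigma_P$, then $\Hcal^\rho\models\theta(a_i)$ forces $a_i\in\Z$ for each $i$, and then $P(\bar a)$ is an integer which equals $0$ in $\Hcal^\rho$, hence equals $0$ in $\Z$ because $\Z\hookrightarrow\Hcal^\rho$; so $\bar a$ is an integer zero of $P$. The assignment $P\mapsto\sigma_P$ is plainly computable.

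Finally I would conclude. If the positive existential theory of $\Hcal^\rho$ over $\Lcal_z$ were decidable, then running that decision procedure on $\sigma_P$ would decide whether an arbitrary $P\in\Z[x_1,\dots,x_n]$ has an integer zero, contradicting \cite{DPR, Matiyasevich}; hence that theory is undecidable, which is the first assertion. For the second assertion, observe that $\Hcal^\rho$ is an integral domain, being a subring of the integral domain $\Hcal$. Hence, after expanding every $\theta(x_i)$ in $\sigma_P$ and distributing, $\sigma_P$ becomes an existentially quantified disjunction of finite polynomial systems over $\Z[z]$, and such a disjunction is equivalent over a domain to the single system obtained by multiplying out — one factor from each system over all choices — since a product of domain elements vanishes iff one factor does. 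Thus $\sigma_P$ is equivalent over $\Hcal^\rho$ to the solvability in $\Hcal^\rho$ of one finite system of polynomial equations with coefficients in $\Z[z]$, and the reduction $P\mapsto\sigma_P$ shows that no algorithm decides such solvability; this is the asserted negative solution of the analogue of Hilbert's tenth problem for $\Hcal^\rho$ with coefficients in $\Z[z]$. The only point needing attention is the one already flagged — keeping the coefficients in $\Z[z]$ — and this is automatic from working inside the language $\Lcal_z$; the genuine obstacle lies entirely in establishing Theorem~\ref{ThmMain1}, not in the present deduction.
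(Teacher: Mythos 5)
Your proposal is correct and is exactly the ``standard reduction argument'' that the paper invokes without spelling out: transfer undecidability from $\Z$ via the positive existential definition of Theorem~\ref{ThmMain1}, then use that $\Hcal^\rho$ is an integral domain to collapse the resulting disjunctions into a single polynomial system over $\Z[z]$. No gaps; the extra detail you supply (normal form, the product trick for disjunctions) is precisely the routine bookkeeping the paper leaves to the reader.
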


A standard reduction argument shows that Theorem \ref{ThmMain2} follows from Theorem \ref{ThmMain1} due to the negative solution to Hilbert's tenth problem over $\Z$.  Thus, we will focus on proving Theorem \ref{ThmMain1}.

We remark that the rings $\Hcal^\rho$ naturally occur in complex analysis. For instance, it is a theorem of Brody \cite{Brody} that if $X$ is a complex compact manifold, one has that there is a non-constant complex holomorphic map $f:\C\to X$ if and only if there is such a map of order at most $2$.



\section{Chebyshev polynomials and a Pell equation} 

Consider the following Pell equation over $\C[z]$:
\begin{equation}\label{EqnPell}
X^2-(z^2-1)Y^2=1.
\end{equation}
Let $w=\sqrt{z^2-1}$, which is algebraic over $\C[z]$. For each integer $n$ we define the polynomials $x_n,y_n\in \C[z]$ by the identity
\begin{equation}\label{EqnDef1}
x_n+y_nw = (z+w)^n.
\end{equation}
Multiplying the previous expression by its Galois conjugate (replacing $w$ by $-w$) one obtains that $(x_n,y_n)$ is a solution for \eqref{EqnPell} for each $n\in \Z$. The following result, due to Denef, is by now standard (see \cite{DenefPoly,DenefPos}, and see Lemma 2.3 in \cite{PheidasZahidi} for an exposition):
\begin{theorem}[Denef]\label{ThmDenef}
The only solutions of \eqref{EqnPell} in $\C[z]$ are given by $(\pm x_n,y_n)$ for $n\in \Z$. Furthermore:
\begin{itemize}
\item[(i)] $x_n,y_n\in \Z[z]$ for each $n$;
\item[(ii)] $\deg(x_n)=|n|$ for each $n$;
\item[(iii)] $y_n(1)=n$ for each $n$; and
\item[(iv)] $k$ divides $n$ in $\Z$ if and only if $y_k$ divides $y_n$ in $\C[z]$.
\end{itemize}
\end{theorem}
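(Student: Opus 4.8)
The plan is to pass from $\C[z]$ to the ring $R=\C[z][w]$ with $w^2=z^2-1$ and to exploit that $R$ is a ring of Laurent polynomials. Since $(z+w)(z-w)=z^2-w^2=1$, the element $u:=z+w$ is a unit of $R$ with $u^{-1}=z-w$, and from $z=(u+u^{-1})/2$, $w=(u-u^{-1})/2$ one gets $R=\C[u,u^{-1}]$. Under the nontrivial $\C[z]$-automorphism $\sigma$ of $R$, which sends $w\mapsto-w$ and hence $u\mapsto u^{-1}$, the norm of $\xi=X+Yw$ is $\xi\,\sigma(\xi)=X^2-(z^2-1)Y^2$; thus $(X,Y)\in\C[z]^2$ solves \eqref{EqnPell} precisely when $\xi$ has norm $1$, in which case $\xi$ is a unit of $\C[u,u^{-1}]$ with inverse $\sigma(\xi)$. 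Since the units of $\C[u,u^{-1}]$ are exactly the monomials $c\,u^n$ with $c\in\C^\times$ and $n\in\Z$, and such a monomial has norm $c\,u^n\cdot c\,u^{-n}=c^2$, the elements of norm $1$ are exactly $\pm u^n=\pm(z+w)^n$, $n\in\Z$. Reading this back through \eqref{EqnDef1}, and noting that $(z+w)^{-n}=(z-w)^n$ forces $x_{-n}=x_n$ and $y_{-n}=-y_n$, this says precisely that the solutions of \eqref{EqnPell} in $\C[z]$ are the pairs $(\pm x_n,y_n)$, $n\in\Z$.

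For the remaining assertions I would work with the recursion obtained by expanding $(z+w)^{n+1}=(z+w)(z+w)^n$, namely
\[
x_{n+1}=z\,x_n+(z^2-1)\,y_n,\qquad y_{n+1}=x_n+z\,y_n,\qquad (x_0,y_0)=(1,0)
\]
(so that, up to normalization, $x_n$ and $y_n$ are the Chebyshev polynomials of the first and second kind, though we do not need this). Claim (i) is then immediate by induction on $n\ge0$, the case $n<0$ following from $x_{-n}=x_n$, $y_{-n}=-y_n$. For (ii) I would strengthen the induction to the assertion that for $n\ge1$ one has $x_n=2^{n-1}z^n+\cdots$ and $y_n=2^{n-1}z^{n-1}+\cdots$ (lower-order terms); the recursion propagates this and gives $\deg x_n=n$, $\deg y_n=n-1$, and again $n<0$ reduces to $x_{-n}=x_n$. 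For (iii) I would evaluate at $z=1$, where $w=0$: the first recursion gives $x_n(1)=x_{n-1}(1)=\cdots=1$, and then $y_{n+1}(1)=x_n(1)+y_n(1)=1+y_n(1)$ with $y_0(1)=0$ yields $y_n(1)=n$ for $n\ge0$, hence $y_{-n}(1)=-n$.

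Claim (iv) requires slightly more. For the forward implication, if $n=km$ then $x_n+y_nw=(x_k+y_kw)^m$; expanding by the binomial theorem and collecting the coefficient of $w$, using $w^{2\ell+1}=w\,(z^2-1)^{\ell}$, gives $y_n=\sum_{\ell\ge0}\binom{m}{2\ell+1}x_k^{\,m-2\ell-1}y_k^{\,2\ell+1}(z^2-1)^{\ell}$, every term of which is divisible by $y_k$. For the converse I would assume $k,n\ge1$ (the general case following from $y_{-m}=-y_m$ and $\lvert k\rvert\mid\lvert n\rvert$) and write $n=qk+r$ with $0\le r<k$. From $x_n+y_nw=(x_{qk}+y_{qk}w)(x_r+y_rw)$ one reads off $y_n=x_{qk}y_r+y_{qk}x_r$. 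By the forward implication $y_k\mid y_{qk}$, so $y_k\mid y_n$ forces $y_k\mid x_{qk}y_r$; but any common factor of $x_{qk}$ and $y_k$ divides $x_{qk}$ and $y_{qk}$, hence divides $x_{qk}^2-(z^2-1)y_{qk}^2=1$, so $x_{qk}$ and $y_k$ are coprime in $\C[z]$ and therefore $y_k\mid y_r$. Since $y_r\ne0$ and, by (ii), $\deg y_r=r-1<k-1=\deg y_k$ whenever $1\le r<k$, this forces $r=0$, i.e.\ $k\mid n$.

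The only step that needs genuine (if short) thought is the converse of (iv): the coprimality of $x_{qk}$ and $y_k$ combined with the degree comparison. Everything else falls out of the Laurent-polynomial description of $R$ and of the recursion, and I expect the main bookkeeping nuisance to be the uniform treatment of negative indices and signs, which is why I would reduce all four items to the case $n\ge0$ at the outset.
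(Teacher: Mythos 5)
Your proposal is correct; note that the paper itself does not prove this theorem but simply cites Denef \cite{DenefPoly,DenefPos} and the exposition in \cite{PheidasZahidi}, so the comparison is with the standard argument rather than with an in-paper proof. Your route is the cleanest one available over $\C$: identifying $\C[z][w]$ with the Laurent ring $\C[u,u^{-1}]$, $u=z+w$, and reading off the norm-one elements from the unit group $\{cu^n\}$ settles the classification in a few lines, whereas Denef's original theorem is stated over an arbitrary integral domain of characteristic zero, where the unit-group description is not available and the usual expositions instead argue by induction on degrees of solutions; your argument buys brevity at the (harmless, for this paper) cost of using that the coefficient field is $\C$. The recursion-based proofs of (i)--(iii) and your treatment of (iv) (forward direction by the binomial expansion of $(x_k+y_kw)^m$; converse by division $n=qk+r$, coprimality of $x_{qk}$ and $y_k$ via $x_{qk}^2-(z^2-1)y_{qk}^2=1$, and the degree comparison $\deg y_r=r-1<k-1=\deg y_k$) are exactly the classical steps and are all sound. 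Only cosmetic patches are needed: say explicitly that $w^2=z^2-1$ is irreducible over $\C(z)$, so $R$ is a domain, $\{1,w\}$ is a free $\C[z]$-basis (justifying the identification of coefficients), and $u$ is transcendental over $\C$ (so the units of $\C[u,u^{-1}]$ really are the monomials $cu^n$); and dispose of the trivial cases $k=0$ or $n=0$ in (iv), where $y_0=0$ makes both sides of the equivalence degenerate consistently.
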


One can check (see \cite{Demeyer}) that $T_n=x_n$ and $U_{n-1}=y_n$ are the classical Chebyshev polynomials of the first and second kind respectively. Thus, and alternative definition for $x_n$ and $y_n$ is given by the identities
\begin{equation}\label{EqnChebyshev}
x_n(\cos\theta)=\cos(n\theta) \quad \mbox{and}\quad y_n(\cos\theta)\sin \theta=\sin(n\theta).
\end{equation}


\section{Holomorphic solutions of the Pell equation} 

The solutions of \eqref{EqnPell} in the ring $\Hcal$ of entire functions were completely classified in \cite{CGFPPV}. The result is the following one (again, $w=\sqrt{z^2-1}$):

\begin{theorem}[Classification of holomorphic solutions of \eqref{EqnPell}] \label{LemmaClassification}
The solutions $(x,y)$ of \eqref{EqnPell} in $\Hcal$ are precisely given by entire functions $x$ and $y$ defined by an expression of the form
$$
x+yw = \epsilon\cdot (z+w)^n\exp(hw)
$$
for an integer $n$, a function $h\in \Hcal$, and some $\epsilon\in \{-1,1\}$.
\end{theorem}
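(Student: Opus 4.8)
The plan is to convert the Pell equation into a question about units on the cylinder $\C^{*}$ by uniformizing the Riemann surface $w^{2}=z^{2}-1$. First I would factor the left-hand side of \eqref{EqnPell} as $(x+yw)(x-yw)$, so that a solution $(x,y)\in\Hcal^{2}$ is the same as an element $u=x+yw$ of $\Hcal[w]$ with $u\bar u=1$, where $\bar u=x-yw$ is the image of $u$ under the nontrivial automorphism $\sigma\colon w\mapsto -w$; in particular $u$ is a unit of $\Hcal[w]$. Next I would introduce a coordinate $t$ by $z=\tfrac12(t+t^{-1})$ and $w=\tfrac12(t-t^{-1})$, so that $z+w=t$, $z-w=t^{-1}$ and $z^{2}-w^{2}=1$ hold identically, and $t\mapsto(z,w)$ is a biholomorphism of $\C^{*}$ onto $\{w^{2}=z^{2}-1\}$ that intertwines $\sigma$ with the inversion $\iota\colon t\mapsto t^{-1}$. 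Pulling back along this map turns $x$, $y$, $w$, and hence $u$, into holomorphic functions on $\C^{*}$; writing $F$ for the corresponding element of the ring $\Ocal(\C^{*})$ of holomorphic functions on $\C^{*}$, the relation $u\bar u=1$ reads $F(t)F(t^{-1})=1$, so $F$ has the holomorphic inverse $t\mapsto F(t^{-1})$ and is a unit of $\Ocal(\C^{*})$.

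Then I would apply the standard description of nowhere-vanishing holomorphic functions on $\C^{*}$: there are a unique $n\in\Z$ --- the winding number $\tfrac{1}{2\pi i}\oint F'/F$ --- and a function $G\in\Ocal(\C^{*})$ with $F(t)=t^{n}e^{G(t)}$. The relation $F(t)F(t^{-1})=1$ becomes $e^{G(t)+G(t^{-1})}=1$, so the holomorphic function $t\mapsto G(t)+G(t^{-1})$ takes values in $2\pi i\Z$ on the connected set $\C^{*}$ and is therefore a constant $2\pi i k$; replacing $G$ by $G-\pi i k$ and setting $\epsilon=(-1)^{k}\in\{-1,1\}$ I may assume $F(t)=\epsilon\,t^{n}e^{G(t)}$ with $G(t^{-1})=-G(t)$.

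It then remains to recognize the exponent as $hw$ for an entire $h$. Since $G$ is anti-invariant under $\iota$, it vanishes at the two fixed points $t=\pm1$ of $\iota$, where $w=\tfrac12(t-t^{-1})$ has simple zeros; hence $h:=G/w$ is holomorphic on all of $\C^{*}$ and $\iota$-invariant. Because $t\mapsto z$ presents $\C$ as the quotient of $\C^{*}$ by $\iota$, the function $h$ descends to a function of $z$ that is holomorphic away from $z=\pm1$ and bounded near those points, hence extends to some $h\in\Hcal$ by Riemann's removable singularity theorem. Unwinding the coordinates, $u=x+yw=\epsilon\,t^{n}e^{hw}=\epsilon\,(z+w)^{n}\exp(hw)$, which is the asserted form. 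For the converse, given $n\in\Z$, $h\in\Hcal$ and $\epsilon\in\{-1,1\}$, the functions $P=\epsilon(z+w)^{n}\exp(hw)$ and $\bar P=\epsilon(z-w)^{n}\exp(-hw)$ are holomorphic on $\C^{*}$ with product $\epsilon^{2}(z^{2}-w^{2})^{n}=1$; the $\iota$-invariant functions $x=\tfrac12(P+\bar P)$ and $y=(P-\bar P)/(2w)$ are bounded near $t=\pm1$ (the numerator of $y$ vanishing there so as to absorb the simple zero of $w$), hence descend to entire functions of $z$, and by construction $x^{2}-(z^{2}-1)y^{2}=P\bar P=1$.

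The step I expect to require the most care is the passage across the branch points $z=\pm1$ in the last paragraph --- both in producing $h$ from $G$ and in recovering $(x,y)$ from $P$ --- where one must check that the $\iota$-invariant holomorphic functions manufactured on $\C^{*}$ acquire no poles over $z=\pm1$; concretely this amounts to comparing orders of vanishing against the simple zeros of $w$ and then invoking removability. Everything else is the formal bookkeeping of the uniformization $\C^{*}\simeq\{w^{2}=z^{2}-1\}$ together with the textbook structure of the unit group $\Ocal(\C^{*})^{\times}$.
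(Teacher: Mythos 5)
Your proposal is correct and complete: uniformizing the conic $w^2=z^2-1$ by $\C^*$ via $t=z+w$, writing the resulting nowhere-vanishing function as $t^n e^{G}$, and using the anti-invariance of $G$ under $t\mapsto t^{-1}$ to produce $h=G/w$ (with the branch points $z=\pm1$ handled by removability, as you note) is a sound proof of both inclusions. Note that the paper does not prove this statement itself but imports it from \cite{CGFPPV}, and your argument is essentially the same standard one used there --- parametrize the curve, classify the units, and descend using the deck involution --- so there is nothing substantive to add.
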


Let $\lfloor t\rfloor$ be the integral part of a real number $t$, and for convenience let us define the degree of the zero polynomial as $-1$. We need the following growth estimate.

\begin{theorem}[Growth of holomorphic solutions of \eqref{EqnPell}]\label{LemmaGrowth} Let $(x,y)$ be a holomorphic solution of \eqref{EqnPell} defined by
$$
x+yw = \epsilon\cdot (z+w)^n\exp(hw)
$$
for some $\epsilon\in\{-1,1\}$, $n\in \Z$, and $h\in \Hcal$. Let $\rho\ge 0$. If the order of $y$ satisfies $\alpha(y)\le \rho$ then $h$ is a polynomial of degree at most $\lfloor\rho\rfloor-1$.
\end{theorem}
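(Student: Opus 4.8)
The plan is to carry out the growth estimate not in the $z$-plane but in the $u$-plane, where $u=z+w$. On the cut plane $\C\setminus[-1,1]$ the branch of $w=\sqrt{z^2-1}$ with $w\sim z$ at infinity is single valued, and $z\mapsto u=z+w$ is the exterior Joukowski map sending $\C\setminus[-1,1]$ biholomorphically onto $\{|u|>1\}$, with inverse $z=\tfrac{1}{2}(u+u^{-1})$ and $w=\tfrac{1}{2}(u-u^{-1})$. The key point is that the a priori only locally defined function $h(z)w(z)$ becomes, after this substitution, the function
$$
H(u):=h\!\left(\tfrac{1}{2}(u+u^{-1})\right)\cdot\tfrac{1}{2}(u-u^{-1}),
$$
which is holomorphic on all of $\C^*=\C\setminus\{0\}$ (since $h$ is entire and $u\mapsto\tfrac{1}{2}(u\pm u^{-1})$ are holomorphic on $\C^*$) and satisfies $H(1/u)=-H(u)$, because $z$ is fixed and $w$ changes sign under $u\mapsto 1/u$. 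Hence $H$ has a Laurent expansion $H(u)=\sum_{k\ge1}a_k(u^k-u^{-k})$ converging on all of $\C^*$, and the whole statement reduces to: (a) $a_k=0$ for every integer $k>\rho$, so $H$ is a Laurent polynomial; and (b) such an $H$ forces $h$ to be a polynomial of degree at most $\lfloor\rho\rfloor-1$.

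For (a), I would exploit that $(x,y)$ solves \eqref{EqnPell}: combined with $w^2=z^2-1$ this gives $(x+yw)(x-yw)=1$, so $x+yw$ is nowhere zero. In the $u$-coordinate the defining relation reads $e^{H(u)}=\epsilon\,u^{-n}(x+yw)$ on $\{|u|>1\}$ (as $z+w=u$), and therefore also $e^{-H(u)}=(e^{H(u)})^{-1}=\epsilon\,u^{n}(x-yw)$. Since $\mathrm{Re}\,H(u)=\log|e^{H(u)}|$, each identity bounds $\pm\mathrm{Re}\,H$ on the circle $|u|=R$ by $|n|\log R+\log|x\mp yw|$ evaluated at $z=\tfrac{1}{2}(u+u^{-1})$. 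On $|u|=R\ge1$ one has $|z|,|w|\le R$, while $|x|$ is controlled through $|x|^2=|1+(z^2-1)y^2|$; so using $\alpha(y)\le\rho$, i.e.\ $\log\max_{|\zeta|\le R}|y(\zeta)|\le C_\epsilon R^{\rho+\epsilon}$ for every $\epsilon>0$ (for $R$ large), one obtains $|\mathrm{Re}\,H(u)|\le C'_\epsilon R^{\rho+\epsilon}$ for all large $R$. Writing $a_k=\alpha_k+i\beta_k$, the real part of $H(Re^{i\theta})$ equals $\sum_{k\ge1}\big[\alpha_k(R^k-R^{-k})\cos k\theta-\beta_k(R^k+R^{-k})\sin k\theta\big]$, so Fourier orthogonality on the circle bounds $|a_k|$ by $O_\epsilon(R^{\rho+\epsilon-k})$; letting $R\to\infty$ with $\epsilon<k-\rho$ gives $a_k=0$ whenever $k>\rho$.

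For (b), once $H(u)=\sum_{k=1}^{\lfloor\rho\rfloor}a_k(u^k-u^{-k})$, I use the identity $u^k-u^{-k}=(u-u^{-1})(u^{k-1}+u^{k-3}+\cdots+u^{1-k})$ in $\C[u,u^{-1}]$ to divide by $w(u)=\tfrac{1}{2}(u-u^{-1})$ and obtain that $h(\tfrac{1}{2}(u+u^{-1}))$ is a Laurent polynomial in $u$, invariant under $u\mapsto u^{-1}$, involving only powers $u^j$ with $|j|\le\lfloor\rho\rfloor-1$. Such a Laurent polynomial is a polynomial of degree at most $\lfloor\rho\rfloor-1$ in $u+u^{-1}$ (as $u^j+u^{-j}$ is a monic degree-$j$ polynomial in $u+u^{-1}$); since $u+u^{-1}=2z$ and $u\mapsto\tfrac{1}{2}(u+u^{-1})$ is surjective onto $\C$, this shows $h(z)$ is a polynomial in $z$ of degree at most $\lfloor\rho\rfloor-1$. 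The degenerate case $\lfloor\rho\rfloor=0$ (so $0\le\rho<1$) is included: the sum is empty, $H\equiv0$, hence $h\equiv0$, which has degree $-1=\lfloor\rho\rfloor-1$ by the stated convention.

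I expect the main obstacle to be recognizing that the estimate should be organized in the $u$-plane: there $h(z)w(z)$ is a genuine single-valued holomorphic function on $\C^*$ whose Laurent series converges at every radius, which is exactly what lets one read off a bound on each coefficient from a bound on $\mathrm{Re}\,H$ around large circles; in the $z$-plane $h(z)\sqrt{z^2-1}$ is multivalued, has no such expansion, and controlling only its real part on a non-simply-connected region controls nothing. The other points requiring care are the observation that $x+yw$ never vanishes (this is what promotes the one-sided bound $\mathrm{Re}\,H=O(R^{\rho+\epsilon})$ to the two-sided $|\mathrm{Re}\,H|=O(R^{\rho+\epsilon})$) and the bookkeeping of the branch of $w$ and of the factor $u^{\pm n}$.
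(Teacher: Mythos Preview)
Your argument is correct and self-contained. The paper, however, does not actually give a proof of this theorem: it simply states that ``the proof of this result is similar to the proof of Theorem~3.4 in \cite{CGFPPV}; the straightforward modification of the argument is left to the reader.'' So there is no detailed proof in the paper to compare against.

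That said, your approach is precisely the natural one and almost certainly the one intended by the reference: pass to the $u$-coordinate via the Joukowski map so that $hw$ becomes the single-valued function $H(u)=h(\tfrac12(u+u^{-1}))\cdot\tfrac12(u-u^{-1})$ on $\C^*$, use the Pell relation $(x+yw)(x-yw)=1$ to get a two-sided bound $|\mathrm{Re}\,H(Re^{i\theta})|=O_\epsilon(R^{\rho+\epsilon})$, and then kill the Laurent coefficients $a_k$ with $k>\rho$ by Fourier orthogonality on circles. The final algebra with $u^k-u^{-k}=(u-u^{-1})(u^{k-1}+\cdots+u^{1-k})$ and the Chebyshev-type identity expressing symmetric Laurent polynomials as polynomials in $u+u^{-1}=2z$ is clean and handles the edge case $\lfloor\rho\rfloor=0$ correctly under the paper's convention $\deg 0=-1$. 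The only cosmetic point worth tightening in a write-up is to say explicitly that the identity $e^{H(u)}=\epsilon u^{-n}(x+yw)$, initially obtained on $\{|u|>1\}$ via the Joukowski chart, extends to all of $\C^*$ by analytic continuation (both sides being holomorphic there); you use this implicitly when invoking the full Laurent expansion of $H$.
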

The proof of this result is similar to the proof of Theorem 3.4 in \cite{CGFPPV}; the straightforward modification of the argument is left to the reader.

A general fact about Pell equations is that their solutions form an abelian group. In the case of \eqref{EqnPell}, let us define the operation $\oplus$ in its set of holomorphic solutions by
$$
(u,v)\oplus (x,y)=(f,g)\quad \mbox{ where }\quad f+gw=(u+vw)(x+yw).
$$
Explicitly,
\begin{equation}
\begin{cases}
f&= ux+(z^2-1)vy,\\
g&= uy + vx.
\end{cases}
\end{equation}
At this point let us introduce the notation $(x,y)^{\oplus k}$ for an integer $k$, to denote the multiplication by $k$ in the group structure of the solutions of \eqref{EqnPell}.

Then one checks the following:
\begin{lemma}[Group structure]\label{LemmaGroupPell} Let $\Rcal$ be a sub-ring of $\Hcal$ containing $\Z[z]$. The solutions in $\Rcal$ of \eqref{EqnPell} form an abelian group under the operation $\oplus$. Furthermore, for all integers $m,n$ we have
$$
(x_m,y_m)\oplus(x_n,y_n)=(x_{m+n},y_{m+n}).
$$
In particular $(x_k,y_k)=(z,1)^{\oplus k}$.
\end{lemma}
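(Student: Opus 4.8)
The plan is to present the solutions of \eqref{EqnPell} in $\Rcal$ as the group of norm-one elements in a quadratic ring extension of $\Rcal$, so that every assertion follows from multiplicativity of the norm.

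First I would check that $w=\sqrt{z^2-1}$ generates an honest quadratic extension of $\Rcal$. Since $\Rcal\subseteq\Hcal$ sits inside the field of meromorphic functions on $\C$, while $\sqrt{z^2-1}$ is not meromorphic on $\C$ (it has branch points at $z=\pm1$), the element $w$ is not a quotient of elements of $\Rcal$; as $w^2=z^2-1\in\Rcal$, this makes $\Rcal[w]=\Rcal\oplus\Rcal w$ a free $\Rcal$-module of rank $2$. Hence every element of $\Rcal[w]$ has unique coordinates $x+yw$ with $x,y\in\Rcal$, so the assignment $f+gw=(u+vw)(x+yw)\mapsto(f,g)$ in the definition of $\oplus$ is unambiguous, and expanding the product reproduces the two displayed formulas for $f$ and $g$. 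Let $\sigma$ denote the $\Rcal$-algebra automorphism of $\Rcal[w]$ with $\sigma(w)=-w$, and set $N(\alpha)=\alpha\,\sigma(\alpha)\in\Rcal$, so that $N(x+yw)=x^2-(z^2-1)y^2$. Being the pointwise product of the two ring endomorphisms $\mathrm{id}$ and $\sigma$, the map $N$ is multiplicative, and a pair $(x,y)\in\Rcal^2$ solves \eqref{EqnPell} if and only if $N(x+yw)=1$.

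Next I would observe that $S:=\{\alpha\in\Rcal[w]:N(\alpha)=1\}$ is a subgroup of the unit group $\Rcal[w]^\times$: it contains $1$; it is closed under multiplication by multiplicativity of $N$; and if $N(\alpha)=1$ then $\alpha\,\sigma(\alpha)=1$, so $\alpha$ is a unit with $\alpha^{-1}=\sigma(\alpha)\in S$. In particular $S$ is abelian. The bijection $(x,y)\mapsto x+yw$ from $\Rcal^2$ onto $\Rcal[w]$ identifies the solutions of \eqref{EqnPell} in $\Rcal$ with $S$ and, by the very definition of $\oplus$, carries $\oplus$ to multiplication; so the solutions form an abelian group under $\oplus$, with neutral element $(1,0)$ and $(x,y)^{\oplus(-1)}=(x,-y)$. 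Finally, each $(x_n,y_n)$ lies in $\Z[z]^2\subseteq\Rcal^2$ by Theorem \ref{ThmDenef}(i), and \eqref{EqnDef1} combined with the law of exponents $(z+w)^{m+n}=(z+w)^m(z+w)^n$ in $\Rcal[w]$ says exactly that $n\mapsto x_n+y_nw=(z+w)^n$ is a group homomorphism $(\Z,+)\to(S,\cdot)$. Since it sends $1$ to $z+w$, i.e.\ to the solution $(z,1)$, it sends $k$ to $(z,1)^{\oplus k}$, and this yields simultaneously $(x_m,y_m)\oplus(x_n,y_n)=(x_{m+n},y_{m+n})$ and $(x_k,y_k)=(z,1)^{\oplus k}$ for all $m,n,k\in\Z$.

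I do not expect any genuine obstacle: the lemma is a formal consequence of multiplicativity of the norm form $N$. The one step that is not pure algebra is the claim that $\Rcal[w]$ is free of rank $2$ over $\Rcal$ — this is what makes $\oplus$ well-defined on coordinates — and it rests on the elementary analytic fact that $\sqrt{z^2-1}$ is not a meromorphic function on $\C$, which I would record explicitly.
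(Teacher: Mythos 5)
Your argument is correct and is exactly the routine verification the paper leaves implicit (``one checks''): identify solutions of \eqref{EqnPell} in $\Rcal$ with norm-one elements of the quadratic ring $\Rcal[w]$, so that $\oplus$ becomes multiplication and the formula $(x_m,y_m)\oplus(x_n,y_n)=(x_{m+n},y_{m+n})$ is the law of exponents for $(z+w)^n$. Your explicit justification that $\Rcal[w]=\Rcal\oplus\Rcal w$ is free of rank $2$ (via $w=\sqrt{z^2-1}$ not being meromorphic on $\C$), which makes $\oplus$ and the conjugation $\sigma$ well defined, is a sound way to record the one non-formal step.
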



\section{Rational values of certain trigonometric polynomials} 

Let us recall some basic facts about the maximal totally real subfield of a cyclotomic field. For a positive integer $k$ let $L_k=\Q(\cos(2\pi/k))$. Then $L_k$ is the maximal totally real subfield of the cyclotomic field $\Q(\exp(2\pi i/k))$ and for each divisor $d|k$ we have the inclusion $L_d\subseteq L_k$. The extension $L_k/\Q$ is Galois and  $\Gal(L_k/\Q)\simeq G_k:=(\Z/k\Z)^\times/(\pm 1)$. Thus, for $k\ge 3$ we have $[L_k:\Q]=\varphi(k)/2$ where $\varphi$ is Euler's function. The isomorphism $G_k\to \Gal(L_k/\Q)$, $a\mapsto \sigma_a$ is explicitly given by the action  on the generator $\cos(2\pi/k)$, which is $\sigma_a(\cos(2\pi/k))=\cos(2\pi a/k)$. 

With these preliminaries, let us discuss the main result of this section.
\begin{lemma}\label{LemmaCos} Let $m\ge 1$ be an integer and let $k=pq$ where $p$ and $q$ are primes satisfying $p>m+1$ and $q>2(m+1)$. Let $F(z)\in\C[z]$ be a non-constant polynomial and consider the function $f(t)=F(\cos(2\pi t))$. Suppose that $f(a/k)\in \Q$ for each $a=1,2,...,k-1$. Then $\deg(F)>m$.
\end{lemma}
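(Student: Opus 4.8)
The plan is to argue by contradiction: assume $d:=\deg(F)\le m$ and derive a contradiction with the hypothesis that $F$ is non-constant. The point one must respect is that $F\in\C[z]$ is allowed to have irrational, even non-real, coefficients, so knowing that $F$ takes a rational value at \emph{one} algebraic number carries no information whatsoever; the full strength of ``$f(a/k)\in\Q$ for all $a$'' has to be used, and the two primes $p$ and $q$ will enter in genuinely different ways.

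The first step — and, I expect, the crux — is to force $F$ to have coefficients in the totally real cyclotomic field $L_q$. To do this I would feed the hypothesis only the values of $f$ at $a/k$ with $a=pj$, $j=1,\dots,q-1$. Since $a/k=j/q$, these are the numbers $F(\cos(2\pi j/q))$, and as $j$ ranges over $1,\dots,q-1$ the arguments $\cos(2\pi j/q)$ take exactly $(q-1)/2$ distinct values, namely all the conjugates of $\cos(2\pi/q)$, which lie in $L_q$. By hypothesis each such value of $F$ lies in $\Q$. The condition $q>2(m+1)$ gives $(q-1)/2\ge m+1>d$, so $d+1$ of these data points already determine the polynomial $F$ of degree $\le d$ by Lagrange interpolation; as the interpolation nodes lie in $L_q$ and the prescribed values lie in $\Q\subseteq L_q$, the interpolant — hence $F$ itself — has all its coefficients in $L_q$. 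This is the only step that disposes of the complex-coefficients difficulty, and it is the reason one is forced to use the ``imprimitive'' arguments $a=pj$ rather than only those $a$ coprime to $k$.

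The second step is to spread the single rational number $f(1/k)$ over a large Galois orbit. Since $F\in L_q[z]$, every $\sigma\in\Gal(L_k/L_q)$ fixes $F$ coefficientwise, so $F(\sigma\alpha)=\sigma(F(\alpha))$ for all $\alpha\in L_k$. I would apply this with $\alpha=\cos(2\pi/k)$, which generates $L_k$ over $\Q$, hence over $L_q$; its orbit $S$ under $\Gal(L_k/L_q)$ therefore has size $[L_k:L_q]=\varphi(pq)/\varphi(q)=p-1$. Using $\sigma_b(\cos(2\pi/k))=\cos(2\pi b/k)$ and the fact that $\sigma_b$ fixes $L_q$ exactly when $b\equiv\pm1\pmod q$, one identifies $S=\{\cos(2\pi b/k):\ b\equiv\pm1\pmod q,\ \gcd(b,p)=1\}$, and (because $q>2$) these are genuinely $p-1$ distinct real numbers. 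For each $\zeta=\sigma(\cos(2\pi/k))\in S$,
\[
F(\zeta)=\sigma\big(F(\cos(2\pi/k))\big)=\sigma\big(f(1/k)\big)=f(1/k),
\]
the last step because $f(1/k)\in\Q$ is fixed by $\sigma$. Hence $F-f(1/k)$ vanishes at all $p-1$ points of $S$.

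To conclude, the condition $p>m+1$ gives $p-1\ge m+1>d=\deg\big(F-f(1/k)\big)$, so $F-f(1/k)$ has strictly more roots than its degree and must be identically zero; thus $F=f(1/k)$ is constant, contradicting the hypothesis, and therefore $\deg(F)>m$. The two arithmetic conditions enter in complementary roles: $q>2(m+1)$ is exactly what makes the interpolation in the first step over-determined, while $p>m+1$ is exactly what makes the orbit in the second step longer than $\deg F$; one cannot run the symmetric argument with $L_p$ in place of $L_q$, since no lower bound on $p$ large enough to over-determine the interpolation over $L_p$ is available.
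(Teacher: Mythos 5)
Your proposal is correct and follows essentially the same route as the paper: first force $F\in L_q[z]$ from the rational values at the points $\cos(2\pi j/q)$ (the paper phrases this via a full-rank Vandermonde-type matrix, you via Lagrange interpolation, which is the same mechanism), then let the kernel of $G_k\to G_q$, i.e.\ $\Gal(L_k/L_q)$ of order $p-1$, act on $F(\cos(2\pi/k))=f(1/k)\in\Q$ to produce $p-1$ roots of $F-f(1/k)$, contradicting $\deg F\le m<p-1$. The roles you assign to the hypotheses $q>2(m+1)$ and $p>m+1$ match the paper exactly.
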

\begin{proof}
For the sake of contradiction, let us suppose that $\deg(F)\le m$.

The matrix 
$$
\left[\cos^b(\pi a/q)\right]_{\substack{1\le a\le (q-1)/2\\0\le b\le m}}
$$
has maximal rank (equal to $m+1$) because $q\ge 2m+3$ and it contains a Vandernonde sub-matrix.  
From this fact and the condition $f(a/q)=f(ap/k)\in \Q$ for each $a=1,...,q-1$, it follows that $F(z)\in L_q[z]$.

Let $\gamma:=F(\cos(2\pi/k))=f(1/k)\in \Q$. Let us consider the reduction map $G_{k}\to G_q$ and let $K$ be its kernel. Then $\#K=(\phi(k)/2)/(\phi(q)/2)=p-1$. Note that the Galois action of $K$ on $L_k$ fixes the elements of $L_q$ and, in particular, it fixes the coefficients of the polynomial $F(z)$. Therefore, letting  $K$ act on both sides of the equation $\gamma=F(\cos(2\pi/k))$ we deduce that there is a set $S\subseteq \{1,...,(k-1)/2\}$ with $\#S=p-1$ such that $F(\cos(2\pi a/k))=\gamma$ for each $a\in S$. This produces $p-1$ different zeros of the polynomial $F(z)-\gamma$, which is not the zero polynomial because $F(z)$ is non-constant. However, $p-1>m\ge \deg(F)$; contradiction.
\end{proof}


\section{Definability of the integers} 

\begin{lemma}\label{LemmaKey} Let $\rho\ge 0$ be a real number, let $m=2(\lfloor\rho\rfloor+1)$, let $p,q$ be primes with $p>m+1$ and $q>2(m+1)$, and let $k=pq$. Let $(x,y)$ be a solution of the Pell equation \eqref{EqnPell} in $\Hcal^\rho$ such that the following two conditions hold:
\begin{itemize}
\item[(i)] there is a solution $(s,t)$ of \eqref{EqnPell} in $\Hcal^\rho$ with $(s,t)^{\oplus k} = (x,y)$; and
\item[(ii)] the polynomial $y_k$ divides $y$ in $\Hcal^\rho$. 
\end{itemize}
Then $y=y_{kn}$ for some integer $n$. 

Furthermore, for every integer $\ell$ we have that the polynomial $y=y_{k\ell}$ belongs to $\Hcal^\rho$ and it satisfies (i) and (ii).
\end{lemma}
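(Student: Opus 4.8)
The plan is to use hypothesis (i) to pin down the analytic shape of $(x,y)$, and then to feed the vanishing information coming from hypothesis (ii) into Lemma~\ref{LemmaCos} in order to force the exponential factor to be trivial.

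First I would unpack (i). Let $(s,t)$ be a solution in $\Hcal^\rho$ with $(s,t)^{\oplus k}=(x,y)$. By Theorem~\ref{LemmaClassification} I can write $s+tw=\epsilon'(z+w)^M\exp(gw)$ with $\epsilon'\in\{-1,1\}$, $M\in\Z$, $g\in\Hcal$; since $t\in\Hcal^\rho$ has $\alpha(t)\le\rho$, Theorem~\ref{LemmaGrowth} shows that $g$ is a polynomial with $\deg g\le\lfloor\rho\rfloor-1$. Because $\oplus$ is just multiplication of the quantities $u+vw$, raising to the $k$-th power gives
$$
x+yw=(s+tw)^k=\eta\,(z+w)^{kM}\exp(kg\,w),\qquad \eta:=(\epsilon')^k\in\{-1,1\}.
$$
Subtracting the Galois conjugate (replace $w$ by $-w$) and specializing to $z=\cos\phi$, $w=i\sin\phi$, so that $z\pm w=e^{\pm i\phi}$, I obtain the trigonometric identity
$$
y(\cos\phi)\sin\phi=\eta\,\sin\big(kM\phi+kg(\cos\phi)\sin\phi\big).
$$

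Next I would extract the arithmetic content of (ii). The polynomial $y_k=U_{k-1}$ has simple zeros exactly at $\cos(j\pi/k)$, $j=1,\dots,k-1$; since $k$ is odd, each $\cos(2\pi a/k)$ with $a=1,\dots,k-1$ is such a zero (here $\sin(2\pi a/k)\ne0$, as $k\nmid 2a$). As $y_k$ divides $y$ in $\Hcal^\rho$, the function $y$ must vanish at every zero of $y_k$, so $y(\cos(2\pi a/k))=0$ for all $a=1,\dots,k-1$. Plugging $\phi=2\pi a/k$ into the identity above, using that $\sin$ vanishes only at integer multiples of $\pi$, and noting $kM\cdot(2\pi a/k)=2\pi Ma\in\pi\Z$, I conclude
$$
k\,g(\cos(2\pi a/k))\,\sin(2\pi a/k)\in\pi\Z\qquad(a=1,\dots,k-1).
$$

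The crux is then to convert this into a hypothesis for Lemma~\ref{LemmaCos}, whose statement concerns rational values at the points $\cos(2\pi a/k)$. Squaring the last relation and writing $\sin^2(2\pi a/k)=1-\cos^2(2\pi a/k)$ eliminates the transcendental factor $\pi$ and produces a genuine polynomial in the cosine: setting $F(z):=g(z)^2(1-z^2)/\pi^2\in\C[z]$, one gets $F(\cos(2\pi a/k))\in\Q$ for every $a=1,\dots,k-1$, while $\deg F\le 2(\lfloor\rho\rfloor-1)+2=2\lfloor\rho\rfloor<m$. Now if $g\ne0$ then $F$ is non-constant of degree at most $m$, so Lemma~\ref{LemmaCos} would force $\deg F>m$, a contradiction; hence $F$ is constant, and since it vanishes at $z=1$ it is identically $0$, giving $g=0$. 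Then $x+yw=\eta(z+w)^{kM}=\pm(x_{kM}+y_{kM}w)$, and using $y_{-N}=-y_N$ (from $(z+w)^{-N}=(z-w)^N$) this yields $y=y_{kn}$ with $n=\pm M$, as desired. I expect this squaring-and-degree step to be the main obstacle, since it is exactly where $\pi$ is removed and the growth bound $\deg g\le\lfloor\rho\rfloor-1$ is matched against the threshold $m=2(\lfloor\rho\rfloor+1)$ (the shift by $1$ also guarantees $m\ge1$ as required by Lemma~\ref{LemmaCos} in the edge case $\lfloor\rho\rfloor=0$, where in fact $g=0$ already).

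Finally, for the furthermore clause, fix $\ell\in\Z$ and consider the solution $(x_{k\ell},y_{k\ell})$, whose entries are polynomials and hence lie in $\C[z]\subseteq\Hcal^\rho$. Condition (i) holds with $(s,t)=(x_\ell,y_\ell)\in\Hcal^\rho$ because $(x_\ell,y_\ell)^{\oplus k}=(x_{k\ell},y_{k\ell})$ by Lemma~\ref{LemmaGroupPell}, and condition (ii) holds because $k\mid k\ell$ forces $y_k\mid y_{k\ell}$ in $\C[z]\subseteq\Hcal^\rho$ by Theorem~\ref{ThmDenef}(iv).
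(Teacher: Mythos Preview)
Your proof is correct and follows essentially the same strategy as the paper: parametrize the solution via Theorem~\ref{LemmaClassification}, bound the degree of the exponential factor via Theorem~\ref{LemmaGrowth}, use condition (ii) to force rational values of a polynomial $F$ at the points $\cos(2\pi a/k)$, and invoke Lemma~\ref{LemmaCos} for the contradiction. The only cosmetic differences are that you obtain the vanishing condition via the direct trigonometric substitution $z=\cos\phi$, $w=i\sin\phi$ (whereas the paper writes $\exp(hw)=u+ivw$ and shows $y_k\mid v$), and that you apply Theorem~\ref{LemmaGrowth} to $(s,t)$ rather than to $(x,y)$; both routes yield the same polynomial $F$ (up to a harmless scalar) and the same degree bound.
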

\begin{proof} By Lemma \ref{LemmaClassification} and the hypothesis (i) we have that $x,y$ satisfy
$$
x+yw=(s+tw)^k=\epsilon\cdot (z+w)^{k\ell}\exp(hw)=\epsilon\cdot (x_{k\ell}+y_{k\ell}w)\exp(hw)
$$
for some  $\epsilon\in\{-1,1\}$, $h\in \Hcal$, and $\ell, k\in \Z$. Furthermore, by Lemma \ref{LemmaGrowth} and the fact that $y\in \Hcal^\rho$, we have that $h$ is a polynomial of degree $\deg h\le \lfloor \rho\rfloor-1$. It suffices to show that $h=0$ because $\epsilon\cdot y_{k\ell} = y_{\epsilon k\ell}$. For the sake of contradiction, let us assume that $h$ is not the zero polynomial.

Let $g(z)=(2\pi i)^{-1}h(z)$ where $i=\sqrt{-1}$. Then 
$$
\exp(hw) = \cos(2\pi gw) + i\cdot  \frac{\sin(2\pi gw)}{w}\cdot w
$$
and by considering Taylor expansions we observe that the functions
$$
u=\cos(2\pi gw) \quad\mbox{and}\quad v=\frac{\sin(2\pi gw)}{w}
$$
belong to $\Hcal$. Hence we have
$$
x+yw=\epsilon\cdot (x_{k\ell}+y_{k\ell}w)(u+ivw)
$$
from where it follows that
$$
y = \epsilon\cdot (uy_{k\ell} +ivx_{k\ell}).
$$
The hypothesis (ii) and Theorem \ref{ThmDenef} give that $y_k$ divides $vx_{k\ell}$. Since $y_k|y_{k\ell}$ and for all $n$ the polynomials $x_n,y_n$ have no common zeros (they satisfy Equation \ref{EqnPell}) we deduce that in fact $y_k$ divides $v$. 

It follows that for every $z_0\in \C$ with $y_{k}(z_0)=0$ we have that $v(z_0)=0$, hence, $2g(z_0)\sqrt{z_0^2-1}$ is an integer (note that integrality is well-defined regardless of the sign choice of $\sqrt{z_0^2-1}$.) Let 
$$
F(z)=4g(z)^2(z^2-1)\in \C[z],
$$ 
then for such a $z_0$ we have that $F(z_0)\in \Z$. Furthermore, since $h$ is not the zero polynomial, we have that $F$ is non-constant.

By \eqref{EqnChebyshev} we see that for all $a\in \{1,2,...,k-1\}$ one has
$$
y_k(\cos(2\pi a/k)) = \frac{\sin(2\pi a)}{\sin(2\pi a/k)} =0.
$$
Indeed, $k=pq$ is odd, so $2\pi a/k\notin \Z\cdot \pi$ for any $a=1,2,...,k-1$. In this way we deduce that
$$
F(\cos(2\pi a/k))\in \Z
$$
for each $a=1,2,...,k-1$. Lemma \ref{LemmaCos} now shows that $\deg F> m = 2(\lfloor\rho\rfloor+1)$ which gives $\deg h =\deg g> \lfloor\rho\rfloor$. However, we had $\deg h\le \lfloor\rho\rfloor-1$, which is a contradiction. This proves $h=0$, hence, $y = y_{\epsilon k\ell}$ as we wanted.

Finally, the last part of the theorem follows from Theorem \ref{ThmDenef} and Lemma \ref{LemmaGroupPell}.
\end{proof}

We are now ready to prove Theorem \ref{ThmMain1}.


\begin{proof}[Proof of Theorem \ref{ThmMain1}] Let $\rho\ge 0$ be a real number. Let $m=2(\lfloor \rho \rfloor +1)$, let $p,q$ be primes with $p>m+1$ and $q>2(m+1)$, and let $k=pq$. This choice of $k$ depends on $\rho$, but from now on it will be fixed. So, our $\Lcal_z$-formulas giving the necessary definitions may depend on $k$.

By Lemma \ref{LemmaKey}, the following set of polynomials is positive existentially $\Lcal_z$-definable in $\Hcal^\rho$:
$$
\Ycal_k = \{y_{kn} : n\in \Z\}.
$$
This is because conditions (i) and (ii) in Lemma \ref{LemmaKey} can be expressed by positive existential $\Lcal_z$-formulas. 

By Picard's theorem there is no non-constant holomorphic map $\C\to C$ to a complex algebraic curve $C$ of geometric genus at least $2$. Thus, the set $\C\subseteq \Hcal^\rho$ is positive existentially $\Lcal_z$-defined by the formula
$$
c(t):\quad \exists s, t^2=s^5+1
$$
because the hyperelliptic curve $y^2=x^5+1$ has genus $2$. 

It follows that the following expression
$$
\Phi(t):\quad \exists y\in \Ycal_k,\exists f, ((z-1)f=y-t)\wedge c(t)
$$
can be expressed by a positive existential $\Lcal_z$-formula and in $\Hcal^\rho$ it defines the set
$$
\{t\in \C : t=y_{kn}(1) \mbox{ for some }n\in \Z\} \subseteq \Hcal^\rho.
$$
By Theorem \ref{ThmDenef} we see that the previous set is $k\cdot \Z$, and the required definition of $\Z$ follows.
\end{proof}


\section{Acknowledgments}

Supported by ANID Fondecyt Regular grant 1230507 from Chile. 


\end{document}